 \def\draw #1 by #2 (#3){
  \vbox to #2{
    \hrule width #1 height 0pt depth 0pt
    \vfill
    \special{picture #3} 
    }
  }
 \def\scaleddraw #1 by #2 (#3 scaled #4){{
  \dimen0=#1 \dimen1=#2
  \divide\dimen0 by 1000 \multiply\dimen0 by #4
  \divide\dimen1 by 1000 \multiply\dimen1 by #4
  \draw \dimen0 by \dimen1 (#3 scaled #4)}
  }
\newtheorem{thm}{Theorem}[section]
\newtheorem{lem}[thm]{Lemma}
\newtheorem{prop}[thm]{Proposition}
\theoremstyle{definition}
\newtheorem{defn}[thm]{Definition}
\newtheorem{ex}[thm]{Example}
\newtheorem{rem}[thm]{Remark}
\numberwithin{equation}{section}
\title{\bf Set-valued mapping and Rough Probability }
\author{{\small Shaban Sedghi}\small $^1$, {\small Nabi Shobe}\small $^2$,  {\small Dae-Won Lee\small$^3$\thanks{e-mail : haverd2001@gmail.com}~ and {\small Siamak Firouzian}\small $^4$}\\
\small $^1$Department of Mathematics, Qaemshahr Branch, \\
\small Islamic Azad University, Qaemshahr, Iran\\
\small sedghi\_gh@yahoo.com\\
\small $^2$Department of Mathematics, Babol Branch, \\
\small Islamic Azad University, Babol,  Iran\\
\small nabi\_shobe@yahoo.com\\
\small $^3$ Department of Mathematics, \\
 \small Yonsei University, Seoul 120-749, Republic of Korea.\\
\small $^4$Department of Mathematics, Payame noor University, Iran\\
\small siamfirouzian@pnu.ac.ir}
\date{}
\begin{document}
\maketitle
\begin{abstract}
 In 1982, the theory of rough sets proposed by Pawlak and in 2013, Luay concerned a rough probability by using the notion of Topology. In this paper, we study the rough probability in the stochastic approximation spaces by using set-valued mapping and obtain results on rough expectation, and rough variance. \\ \\
{\bf  AMS Mathematics Subject Classification:} 54E40, 54E35, 54H25. \\
{\bf Keywords:} Rough set; Lower inverse approximation, Upper inverse approximation, Set-valued mapping, Stochastic approximation space, rough expectation, rough variance.

\end{abstract}

\section{Introduction and Preliminaries}

 The theory of rough sets was first introduced by Pawlak \cite{17}. Rough set theory, a new mathematical approach to deal with inexact, uncertain or vague knowledge, has
recently received wide attention on the research areas in both of the real-life applications and the theory itself. Also, after the proposal by Pawlak, there have been many researches on the connection between rough sets and algebraic systems \cite{1,2,3,4,5,6,8,9,10,12,13,14}. In \cite{m.j} Jamal study stochastic approximation spaces from topological view that generalize the stochastic approximation space in the case of general relation. The couple  $S =(X, P)$ is called the stochastic approximation space, where X is a non-empty set and $P$ is a probability measure.

\vspace{3mm}
 Lower and upper inverse is defined as follows:

 \begin{defn}
 Let X  be a non-empty set and $A \subseteq X$. Let $T :X\longrightarrow P^* (X )$  be a set-valued mapping where $P^* (X)$ denotes the set of all non-empty subsets of X. The lower
 inverse and upper inverse of A under T are defined as \[T^+ (A)=\{x\in X |T (x )\subseteq A\}~~ ;~~ T^{-1} (A )=\{ x \in X |T (x) \cap A \not =\emptyset\},\] respectively. Also,  $(T^+(A) ,T^{-1} (A))$ is called T-rough set of X.
 \end{defn}

 \vspace{3mm}

 Also, using lower and upper inverse, the lower and upper probability is defined as follows:

 \begin{defn}\label{defn1}
 Let $T :X\longrightarrow P^* (X )$  be a set-valued mapping and $A$ be an event in the stochastic approximation space $S = (X,P)$. Then the lower and upper probability of A is given by: \[\underline{P}(A) = P(T^+(A))~~ ;~~ \overline{P}(A) = P(T^{-1}(A)),\] respectively.  Clearly, $0\leq \underline{P}(A)\leq 1$ and $0\leq \overline{P}(A)\leq 1$.
 \end{defn}

 \vspace{3mm}

 \begin{defn}
 Let X be a non-empty set. Let $T :X\longrightarrow P^* (X )$ be a set-valued mapping. Then we say $T$ has

 i) reflective property, if for every $x\in X$ we have $x\in T(x)$,

 ii) transitive property, if for every $y\in T(x)$ and $z\in T(y)$ we have $z\in T(x)$.
 \end{defn}

 \vspace{3mm}

  \begin{rem}
 Let  $T$ has reflective and transitive properties, then in topological space $(X,\tau)$ we have $T^{-1}(A)=\overline{A}$ and $T^+(A)=A^{o}$, where $A^{o}$ denotes interior of $A$ and $\overline{A}$ denotes the closure of $A$. These implies that  Definition \ref{defn1} of our paper is same the Definition 2.2 of paper \cite{a.a.a}. Hence this paper is generalized version of paper \cite{a.a.a}.
 \end{rem}

 \vspace{3mm}

 \begin{defn}
 Let  $A$ be a subset of topological space   $(X,\tau)$, then we called $A$ is a exact set if $T^+(A)=T^{-1}(A)=A$.
 \end{defn}

 \vspace{3mm}

 \section{Main Result}

 \begin{prop} Let $T:X\longrightarrow P^* (X )$  be a set-valued mapping and  $A,B$ be two events in the  stochastic approximation space $S=(X,P)$.
 Then the following holds:
 \begin{itemize}
 \item[{\rm (1)}] $\underline{P}(\emptyset)=0=\overline{P}(\emptyset)$;
 \item[{\rm (2)}] $\underline{P}(X)=1=\overline{P}(X)$;
 \item[{\rm (3)}] $\overline{P}(A\cup B)\leq \overline{P}(A)+ \overline{P}(B)-\overline{P}(A\cap B)$;
 \item[{\rm (4)}] $\underline{P}(A\cup B)\geq \underline{P}(A)+\underline{P}( B)-\underline{P}(A\cap B)$;
 \item[{\rm (5)}] $\underline{P}( A^c)=1-\overline{P}(A)$;
 \item[{\rm (6)}] $\underline{P}( A- B)\leq \underline{P}(A)-\underline{P}(A\cap B)$;
 \item[{\rm (7)}] $\underline{P}( A)\leq \overline{P}(A)$;
 \item[{\rm (8)}] If $A\subseteq B$, then $\underline{P}(A)\leq \underline{P}(B)$ and $\overline{P}(A)\leq \overline{P}(B)$.
 \end{itemize}
 \end{prop}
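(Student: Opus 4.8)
The plan is to reduce each of the eight assertions to an elementary set-theoretic identity or inclusion between $T^{+}(\cdot)$ and $T^{-1}(\cdot)$, and then to invoke only the standard properties of the probability measure $P$: the normalizations $P(\emptyset)=0$ and $P(X)=1$, monotonicity, finite additivity, and the complement rule $P(E^{c})=1-P(E)$.

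First I would record the behaviour of the two inverse operators on $\emptyset$ and $X$. Since every $T(x)$ is, by hypothesis, a \emph{non-empty} subset of $X$, the condition $T(x)\subseteq\emptyset$ never holds and $T(x)\cap\emptyset\neq\emptyset$ never holds, so $T^{+}(\emptyset)=T^{-1}(\emptyset)=\emptyset$; likewise $T^{+}(X)=T^{-1}(X)=X$. Applying $P$ gives (1) and (2). For (8), $A\subseteq B$ forces $T(x)\subseteq A\Rightarrow T(x)\subseteq B$ and $T(x)\cap A\neq\emptyset\Rightarrow T(x)\cap B\neq\emptyset$, hence $T^{+}(A)\subseteq T^{+}(B)$ and $T^{-1}(A)\subseteq T^{-1}(B)$, and monotonicity of $P$ finishes it. For (7), if $T(x)\subseteq A$ then --- again using $T(x)\neq\emptyset$ --- $T(x)\cap A=T(x)\neq\emptyset$, so $T^{+}(A)\subseteq T^{-1}(A)$ and therefore $\underline P(A)\le\overline P(A)$. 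For (5), the chain of equivalences $x\in T^{+}(A^{c})\iff T(x)\subseteq A^{c}\iff T(x)\cap A=\emptyset\iff x\notin T^{-1}(A)$ shows $T^{+}(A^{c})=\big(T^{-1}(A)\big)^{c}$; applying $P$ and the complement rule yields $\underline P(A^{c})=1-\overline P(A)$.

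For (3) and (4) I would first verify the two identities $T^{-1}(A\cup B)=T^{-1}(A)\cup T^{-1}(B)$ and $T^{+}(A\cap B)=T^{+}(A)\cap T^{+}(B)$ (both immediate from the definitions), together with the one-sided inclusions $T^{-1}(A\cap B)\subseteq T^{-1}(A)\cap T^{-1}(B)$ and $T^{+}(A)\cup T^{+}(B)\subseteq T^{+}(A\cup B)$. Then (3) reads $\overline P(A\cup B)=P\big(T^{-1}(A)\cup T^{-1}(B)\big)=\overline P(A)+\overline P(B)-P\big(T^{-1}(A)\cap T^{-1}(B)\big)\le\overline P(A)+\overline P(B)-\overline P(A\cap B)$, the last step by monotonicity of $P$ applied to $T^{-1}(A\cap B)\subseteq T^{-1}(A)\cap T^{-1}(B)$; and (4) reads, symmetrically, $\underline P(A\cup B)=P\big(T^{+}(A\cup B)\big)\ge P\big(T^{+}(A)\cup T^{+}(B)\big)=\underline P(A)+\underline P(B)-P\big(T^{+}(A)\cap T^{+}(B)\big)=\underline P(A)+\underline P(B)-\underline P(A\cap B)$. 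Finally (6) follows from (4): since $A$ is the disjoint union of $A\cap B$ and $A-B$, applying (4) to these two sets gives $\underline P(A)\ge\underline P(A\cap B)+\underline P(A-B)-\underline P(\emptyset)=\underline P(A\cap B)+\underline P(A-B)$, which rearranges to (6).

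I do not expect a genuine obstacle; the whole content is the elementary ``duality'' between $T^{+}$ and $T^{-1}$ and the repeated use of the non-emptiness of $T(x)$ --- exactly the hypothesis that makes (1), (2) and (7) work (and that would fail for a general partial relation). The only place needing slight care is keeping the directions of the inclusions straight in (3)--(4), since $T^{+}$ and $T^{-1}$ behave oppositely with respect to $\cup$ and $\cap$; I would re-derive each inclusion from the defining membership condition before feeding it to $P$.
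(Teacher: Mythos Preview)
Your proposal is correct and fully fleshes out what the paper leaves implicit: the paper's own proof consists of the single line ``It is straightforward.'' Your reduction of each item to a set-theoretic identity or inclusion between $T^{+}$ and $T^{-1}$, followed by an application of the basic properties of $P$, is exactly the intended argument, and you have correctly isolated the role of the non-emptiness hypothesis on $T(x)$ in items (1), (2) and (7).
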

 \begin{proof} It is straightforward.
 \end{proof}

 \vspace{3mm}

 \begin{defn} Let $T:X\longrightarrow P^* (X )$  be a set-valued mapping and $A$ be an event in the  stochastic approximation space $S = (X, P)$. The rough probability of $A$, denoted by $P^*(A)$, is given by: \[P^*(A)=(\underline{P}(A),\overline{P}(A)).\]
 \end{defn}

 \vspace{3mm}

 \begin{lem}Let $T:X\longrightarrow P^* (X )$  be a set-valued mapping and  $A$ be a event in the  stochastic approximation space $S=(X,P)$.
 \begin{itemize}
 \item[{\rm (1)}] If $T$ has reflective property, then $\underline{P}(A)\leq P(A)\leq \overline{P}(A)$;
 \item[{\rm (2)}]If $T$ has reflective and transitive properties, then $\underline{P}(T^+(A))= \underline{P}(A)$ and $ \overline{P}(T^{-1}(A))=\overline{P}(A)$;
 \item[{\rm (3)}]If  $A$ is an exact subset of $X$, then $\underline{P}(A)= P(A)= \overline{P}(A)$.
 \end{itemize}
 \end{lem}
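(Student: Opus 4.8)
The plan is to reduce all three assertions to purely set-theoretic statements about the operators $T^+$ and $T^{-1}$, and then apply the monotonicity of the probability measure $P$ (item (8) of the preceding Proposition). Throughout I will work with the characterisations $T^+(A)=\{x:T(x)\subseteq A\}$ and $T^{-1}(A)=\{x:T(x)\cap A\neq\emptyset\}$, together with $\underline{P}(A)=P(T^+(A))$ and $\overline{P}(A)=P(T^{-1}(A))$.

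For (1), the key observation is the sandwich $T^+(A)\subseteq A\subseteq T^{-1}(A)$, valid whenever $T$ is reflective. Indeed, if $x\in T^+(A)$ then $T(x)\subseteq A$, and since $x\in T(x)$ we get $x\in A$; conversely, if $x\in A$ then $x\in T(x)\cap A$, so $T(x)\cap A\neq\emptyset$ and hence $x\in T^{-1}(A)$. Applying $P$ and using monotonicity yields $\underline{P}(A)=P(T^+(A))\leq P(A)\leq P(T^{-1}(A))=\overline{P}(A)$.

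For (2), I would establish the two idempotency identities $T^+(T^+(A))=T^+(A)$ and $T^{-1}(T^{-1}(A))=T^{-1}(A)$; the conclusion then follows by applying $P$ to both sides. The inclusion $T^+(T^+(A))\subseteq T^+(A)$ is immediate from reflexivity (if $T(x)\subseteq T^+(A)$ then $x\in T(x)\subseteq T^+(A)$), and dually $T^{-1}(A)\subseteq T^{-1}(T^{-1}(A))$ holds because $x\in T(x)\cap T^{-1}(A)$ whenever $x\in T^{-1}(A)$. The reverse inclusions are where transitivity enters and constitute the main technical step: to see $T^+(A)\subseteq T^+(T^+(A))$, fix $x\in T^+(A)$ and $y\in T(x)$; for any $z\in T(y)$, transitivity gives $z\in T(x)\subseteq A$, so $T(y)\subseteq A$, i.e.\ $y\in T^+(A)$, whence $T(x)\subseteq T^+(A)$. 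Symmetrically, for $T^{-1}(T^{-1}(A))\subseteq T^{-1}(A)$, take $x$ with $T(x)\cap T^{-1}(A)\neq\emptyset$, pick $y\in T(x)$ with $T(y)\cap A\neq\emptyset$, pick $z\in T(y)\cap A$, and conclude $z\in T(x)\cap A$ by transitivity, so $x\in T^{-1}(A)$.

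Part (3) is immediate: by definition of an exact set one has $T^+(A)=T^{-1}(A)=A$, so $\underline{P}(A)=P(T^+(A))=P(A)=P(T^{-1}(A))=\overline{P}(A)$. The only place any real care is needed is the two reverse inclusions in (2), where one must chase an element through two successive applications of $T$ and invoke transitivity in the correct direction; everything else is a one-line consequence of reflexivity and the monotonicity of $P$.
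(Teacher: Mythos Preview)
Your proof is correct and is precisely the natural argument the paper has in mind: the paper's own proof is simply ``It is straightforward,'' and what you have written is exactly the set-theoretic verification (the sandwich $T^+(A)\subseteq A\subseteq T^{-1}(A)$ under reflexivity, idempotency of $T^+$ and $T^{-1}$ under reflexivity plus transitivity, and the exactness identity) that makes this straightforward claim rigorous. One very minor remark: the monotonicity you invoke is that of the underlying probability measure $P$, which is a basic axiom rather than item~(8) of the preceding Proposition (that item concerns $\underline{P}$ and $\overline{P}$), but this does not affect the argument.
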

 \begin{proof}
 It is straightforward.
 \end{proof}

 \vspace{3mm}

 \begin{ex}\label{ex1} Let $X=\{1,2,3,4,5,6\}$ and let  $T : X \longrightarrow P^*(X)$ where for every $n \in X$, $T(1) = \{1\}, T(2) = \{1,2\}, T(3) = \{3\}, T(4) = \{4\}, T(5) =T(6) = \{1,5,6\} $.

 \begin{itemize}
 \item[{\rm (1)}] Let $A=\{1,3,5\}$ then $T^+(A)=\{1,3\}$, $\underline{P}(A)=\frac{2}{6}$ and $T^{-1}(A)=\{1,2,3,5,6\}$, $P(A)=\frac{3}{6}$ and $\overline{P}(A)=\frac{5}{6}$.
 \end{itemize}
 \end{ex}

 Tabel 2.1: Lower and upper probabilities of a random variable U\\
 \begin{center}
 \begin{tabular}{|c|c|c|c|c|c|c|}
 \hline
  u &~ 1 ~& 2 ~& 3 ~& 4 ~& 5 ~& 6 \\\hline
  $\underline{P}(U=u)$ & $\frac{1}{6}$ & 0 & $\frac{1}{6}$ & $\frac{1}{6}$ & 0 & 0 \\\hline
  $\overline{P}(U=u)$ & $\frac{4}{6}$ & $\frac{1}{6}$ & $\frac{1}{6}$ & $\frac{1}{6}$ & $\frac{2}{6}$ & $\frac{2}{6}$ \\ \hline
 \end{tabular}
 \end{center}

 \vspace{3mm}

 \begin{defn} Let $T:X\longrightarrow P^* (X )$  be a set-valued mapping and  $A,B$ be two events in the  stochastic approximation space $S=(X,P)$. We define $ \underline{P}(A|B)= \frac{\underline{P}(A\cap B)}{\underline{P}(B)}$ for every $\underline{P}(B)\neq 0$ and $\overline{P}(A|B)=\frac{\overline{P}(A\cap B)}{\overline{P}(B)}$ for every $\overline{P}(B)\neq 0$.
 \end{defn}

\vspace{3mm}

 \begin{lem} Let $T:X\longrightarrow P^* (X )$  be a set-valued mapping and  $A,B,C$ be three events in the  stochastic approximation space $S=(X,P)$.
 then the following holds:
 \begin{itemize}
 \item[{\rm (1)}] $\underline{P}(A|A)=\overline{P}(A|A)=1$;
 \item[{\rm (2)}] $\underline{P}(\emptyset|A)=\overline{P}(\emptyset|A)=0$;
 \item[{\rm (3)}] $\underline{P}(A|X)=\underline{P}(A)$ and $\overline{P}(A|X)=\overline{P}(A)$;
 \item[{\rm (4)}] $\underline{P}(A^c| B)\leq 1-\underline{P}(A|B)$;
 \item[{\rm (5)}] $\underline{P}(A\cup B|C)\geq \underline{P}(A|C)+\underline{P}( B|C)-\underline{P}(A\cap B|C)$;
 \item[{\rm (6)}] $\overline{P}( A^c|B)\geq 1-\overline{P}(A|B)$;
 \item[{\rm (7)}] $\overline{P}(A\cup B|C)\leq \overline{P}(A|C)+\overline{P}( B|C)-\overline{P}(A\cap B|C)$;
 \item[{\rm (8)}] $\underline{P}(A)\geq\sum_{i=1}^{n}\underline{P}(A|B_i)\underline{P}(B_i)$, where $\bigcup_{i=1}^{n}B_i=X$;
 \item[{\rm (9)}] $\overline{P}(A)\leq\sum_{i=1}^{n}\overline{P}(A|B_i)\overline{P}(B_i)$, where $\bigcup_{i=1}^{n}B_i=X$;
 \item[{\rm (10)}] If $T$ has transitive property and if $B$ is an exact subset of $X$ then \[\underline{P}(A|B)\leq P(A|B)\leq \overline{P}(A|B).\]
 \end{itemize}
 \end{lem}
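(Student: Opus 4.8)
The plan is to reduce each item to an inequality already recorded in the Proposition above, using only the definition $\underline{P}(A|B)=\underline{P}(A\cap B)/\underline{P}(B)$, $\overline{P}(A|B)=\overline{P}(A\cap B)/\overline{P}(B)$ and elementary set algebra; the conditioning event is simply carried along inside every probability. Items (1)--(3) are immediate: $A\cap A=A$ gives (1); $\emptyset\cap A=\emptyset$ together with part~(1) of the Proposition gives (2); and $A\cap X=A$ with $\underline{P}(X)=\overline{P}(X)=1$ (part~(2) of the Proposition) gives (3), in both the lower and the upper versions.

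For (4)--(7) I would intersect with the conditioning event, distribute, and invoke the matching inequality of the Proposition. In (4), $A^{c}\cap B=B-A$, so part~(6) of the Proposition gives $\underline{P}(A^{c}\cap B)\le\underline{P}(B)-\underline{P}(A\cap B)$, and dividing by $\underline{P}(B)$ yields the claim. In (5) and (7), $(A\cup B)\cap C=(A\cap C)\cup(B\cap C)$ and $(A\cap C)\cap(B\cap C)=A\cap B\cap C$, so applying part~(4) of the Proposition (resp.\ part~(3)) to $A\cap C$ and $B\cap C$ and dividing by $\underline{P}(C)$ (resp.\ $\overline{P}(C)$) finishes it. For (6), write $B$ as the disjoint union $(B-A)\cup(A\cap B)$; applying part~(3) of the Proposition to this union gives $\overline{P}(B)\le\overline{P}(B-A)+\overline{P}(A\cap B)$, hence $\overline{P}(A^{c}\cap B)=\overline{P}(B-A)\ge\overline{P}(B)-\overline{P}(A\cap B)$, and division by $\overline{P}(B)$ gives (6).

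Items (8) and (9), the rough laws of total probability, I would handle at the level of the maps $T^{+},T^{-1}$. Since $\bigcup_{i}B_{i}=X$ we have $A=\bigcup_{i}(A\cap B_{i})$. As $T^{-1}$ commutes with unions, $T^{-1}(A)=\bigcup_{i}T^{-1}(A\cap B_{i})$, so subadditivity of $P$ gives $\overline{P}(A)\le\sum_{i}\overline{P}(A\cap B_{i})=\sum_{i}\overline{P}(A|B_{i})\overline{P}(B_{i})$, which is (9). For (8) one only has $T^{+}(A)\supseteq\bigcup_{i}T^{+}(A\cap B_{i})$; but when the $B_{i}$ are pairwise disjoint the sets $A\cap B_{i}$ are disjoint, and since each $T(x)$ is nonempty the sets $T^{+}(A\cap B_{i})$ are then pairwise disjoint as well, so additivity of $P$ on this disjoint subfamily of $T^{+}(A)$ gives $\underline{P}(A)=P(T^{+}(A))\ge\sum_{i}P(T^{+}(A\cap B_{i}))=\sum_{i}\underline{P}(A|B_{i})\underline{P}(B_{i})$. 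I expect (8) to be the place where the hypotheses must be used carefully: pairwise disjointness of the $B_{i}$ (a partition) is genuinely needed, otherwise the inequality can fail.

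Finally, for (10) the point is that exactness of $B$ gives $T^{+}(B)=T^{-1}(B)=B$, hence $\underline{P}(B)=\overline{P}(B)=P(B)$, so $\underline{P}(A|B)$, $P(A|B)$ and $\overline{P}(A|B)$ all have the same denominator $P(B)$ and the claim reduces to the numerator chain $\underline{P}(A\cap B)\le P(A\cap B)\le\overline{P}(A\cap B)$. This is the earlier Lemma's bound applied to the event $A\cap B$: one checks $T^{+}(A\cap B)\subseteq A\cap B\subseteq T^{-1}(A\cap B)$ --- the first inclusion using $T^{+}(A\cap B)\subseteq T^{+}(B)=B$ and the assumed property of $T$ --- and then monotonicity of $P$ yields the two inequalities. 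This reduction, and the verification that exactness of $B$ together with the hypothesis on $T$ really delivers those two inclusions, is the part I would write out most carefully.
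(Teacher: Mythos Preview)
The paper states this lemma without any proof, so there is no argument of the authors' to compare against; your proposal supplies what the paper omits. Your reductions for items (1)--(7) and (9) are correct and are precisely the kind of routine verification the surrounding ``straightforward'' proofs suggest: carry the conditioning event inside, apply the matching item of Proposition~2.1, and divide.

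Two points deserve comment. Your warning about (8) is justified and is not mere caution: with only $\bigcup_{i}B_{i}=X$ the inequality can fail outright (take every $B_{i}=X$), so the pairwise-disjointness you add is a genuine missing hypothesis, not a convenience of your proof. For (10), your reduction to the numerator chain $\underline{P}(A\cap B)\le P(A\cap B)\le\overline{P}(A\cap B)$ is the right strategy, but the phrase ``the assumed property of $T$'' conceals a real issue: the paper assumes only \emph{transitivity}, whereas the inclusion $T^{+}(A\cap B)\subseteq A$ that you need (exactness of $B$ gives only $T^{+}(A\cap B)\subseteq B$) requires \emph{reflexivity}, as in Lemma~2.3(1). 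With transitivity alone there are counterexamples, e.g.\ $X=\{1,2\}$, $T(1)=T(2)=\{2\}$, $B=X$, $A=\{2\}$, where $\underline{P}(A|B)=1>\tfrac12=P(A|B)$. So the gap here lies in the statement itself rather than in your approach; if you write this up, make the reflexivity assumption explicit.
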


 \vspace{3mm}

 \begin{ex}Consider the same experiment as in Example \ref{ex1}. Let $X=\{1,2,3,4,5,6\}$, $B=\{1,3,5\}$ and $A=\{4,5,6\}$ then \[\underline{P}(A|B)=\frac{\underline{P}(\{5\})}{\underline{P}(\{1,3,5\})}=0,\] and \[\overline{P}(A|B)=\frac{\overline{P}(\{5\})}{\overline{P}(\{1,3,5\})}=\frac{\frac{2}{6}}{\frac{5}{6}}=\frac{2}{5}.\]
 \end{ex}

 \vspace{3mm}

 We define the lower and upper distribution functions of a random variable $U$.

 \begin{defn}Let $T:X\longrightarrow P^* (X )$  be a set-valued mapping and $U$ be a random variable in the  stochastic approximation space $S = (X,P)$. The lower and upper distribution of $U$ is given by: \[\underline{F}(u)=\underline{P}(U\leq u)~~~;~~~ \overline{F}(u)=\overline{P}(U\leq u),\] respectively.
 \end{defn}

 \vspace{3mm}

\begin{defn} Let $T
:X\longrightarrow P^* (X )$  be a set-valued mapping and $U$ be a
random variable in the  stochastic approximation space $S=(X,P)$.
The rough distribution function of $U$, denoted by $F^*(u)$, is
given by:
\[F^*(u)=(\underline{F}(u),\overline{F}(u)).\]\end{defn}

\vspace{3mm}

\begin{ex}
Consider the same experiment as in Example \ref{ex1}. The Lower
and upper distribution function of $U$ are
\begin{equation*}
\underline{F}(u)=\left\{
\begin{array}{cc}
0, & -\infty<u<1, \\
 \frac{1}{6} , & 1\leq u<3 , \\
\frac{2}{4}, & 3\leq u<4 , \\
\frac{3}{6}, & 4\leq u<\infty
\end{array}%
\right.
\end{equation*}And
\begin{equation*}
\overline{F}(u)=\left\{
\begin{array}{cc}
0, & -\infty<u<1, \\
\frac{4}{6}, & 1\leq u<2 , \\
\frac{5}{6}, & 2\leq u<3, \\
1, & 3\leq u<4, \\
\frac{8}{6}, & 4\leq u<5, \\
\frac{9}{6}, & 5\leq u<6, \\
\frac{11}{6}, & 6\leq u<\infty.
\end{array}%
\right.
\end{equation*}Therefore $F^*(2)=(\frac{1}{6}, \frac{5}{6}).$
\end{ex}

\vspace{3mm}

We define the lower and upper expectations of a random variable
$U$ in the  stochastic approximation space $S=(X,P)$.

\begin{defn}   Let $T
:X\longrightarrow P^* (X )$  be a set-valued mapping and $U$ be a
random variable in the
 stochastic approximation space $S=(X,P)$.
The lower and upper expectation of $U$ is given by:
\[\underline{E}(u)=\sum_{k=1}^{n}u_k \underline{P}(U=u_k)~~~;~~~\overline{E}(u)=\sum_{k=1}^{n}u_k \overline{P}(U=u_k),\]respectively.
\end{defn}

\vspace{3mm}

\begin{defn}  Let $T
:X\longrightarrow P^* (X )$  be a set-valued mapping and $U$ be a
random variable in the  stochastic approximation space $S=(X,P)$.
The rough expectation of $U$ is denoted by $E^*(U)$ and is given
by: \[E^*(U)=(\underline{E}(U), \overline{E}(U)).\]\end{defn}

\vspace{3mm}

\begin{ex}
 Consider the same experiment as in Example \ref{ex1}. Then the
lower and upper expectations of $U$ are
\[\underline{E}(U)=1\cdot \frac{1}{6}+3\cdot \frac{1}{6}+4\cdot \frac{1}{6}=\frac{4}{3},\]and \[\overline{E}(U)=
1\cdot \frac{4}{6}+2\cdot \frac{1}{6}+3\cdot \frac{1}{6}+4\cdot \frac{1}{6}+5\cdot \frac{2}{6}+6\cdot \frac{2}{6}=\frac{35}{6}.\]
Hence rough expectation of $U$ is\[E^*(U)=(\frac{4}{3},
\frac{35}{6}).\]
\end{ex}

\vspace{3mm}

\begin{thm}\label{them} Let $T
:X\longrightarrow P^* (X )$  be a set-valued mapping and $U$ be a
random variable in the  stochastic approximation space $S=
(X,P)$. For any constants $a$ and $b$, we have
\[\underline{E}(aU+b ) = a\underline{E}(U)+bc~where~0\leq c\leq
1.\]
\end{thm}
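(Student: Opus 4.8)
The plan is to reduce the statement to an elementary manipulation of the defining sum for $\underline{E}$, the only real content being the identification of the constant $c$ and the verification that it lies in $[0,1]$. Throughout, let $u_1,\dots,u_n$ be the (distinct) values of $U$ and put $E_k=\{U=u_k\}$, so that $E_1,\dots,E_n$ partition $X$.

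First I would dispose of the degenerate case $a=0$: then $aU+b$ is identically $b$, its unique value is $b$, the event $\{aU+b=b\}$ equals $X$, and so $\underline{E}(aU+b)=b\,\underline{P}(X)=b$ by part (2) of the Proposition above; the claimed identity then holds with $c=1$. For $a\neq 0$ the affine map $t\mapsto at+b$ is injective, so $aU+b$ has the $n$ distinct values $au_k+b$ with $\{aU+b=au_k+b\}=\{U=u_k\}=E_k$, hence $\underline{P}(aU+b=au_k+b)=\underline{P}(U=u_k)$. Substituting into the definition of $\underline{E}$ and splitting the sum,
\[
\underline{E}(aU+b)=\sum_{k=1}^{n}(au_k+b)\,\underline{P}(U=u_k)
=a\sum_{k=1}^{n}u_k\,\underline{P}(U=u_k)+b\sum_{k=1}^{n}\underline{P}(U=u_k)
=a\,\underline{E}(U)+bc ,
\]
where $c:=\sum_{k=1}^{n}\underline{P}(U=u_k)$ is a number depending only on $U$ and $T$.

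What remains --- and this is the single step that is not bookkeeping --- is to check $0\le c\le1$. Nonnegativity is clear since $\underline{P}(U=u_k)=P(T^{+}(E_k))\ge0$. For the upper bound I would use that $T^{+}$ carries disjoint sets to disjoint sets: if $x\in T^{+}(E_j)\cap T^{+}(E_k)$ with $j\ne k$, then $T(x)\subseteq E_j\cap E_k=\emptyset$, contradicting $T(x)\in P^{*}(X)$. Therefore $T^{+}(E_1),\dots,T^{+}(E_n)$ are pairwise disjoint, and additivity and monotonicity of the probability measure $P$ give
\[
c=\sum_{k=1}^{n}P\big(T^{+}(E_k)\big)=P\Big(\bigcup_{k=1}^{n}T^{+}(E_k)\Big)\le P(X)=1 .
\]
I expect the main obstacle to be psychological rather than technical: one must notice that it is precisely the nonemptiness of each $T(x)$ (built into the codomain $P^{*}(X)$) that forces the disjointness above and hence pins $c$ into the unit interval; granting that, the theorem is immediate. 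An entirely parallel computation gives $\overline{E}(aU+b)=a\,\overline{E}(U)+bc'$ with $c'=\sum_{k}\overline{P}(U=u_k)$, though there the natural bound runs the other way, $c'\ge1$, since $T^{-1}$ need not preserve disjointness.
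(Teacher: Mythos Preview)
Your proof is correct and follows the same route as the paper: expand the defining sum for $\underline{E}(aU+b)$ linearly and identify $c=\sum_{k=1}^{n}\underline{P}(U=u_k)$. You go further than the paper by treating the degenerate case $a=0$ separately and by actually proving $0\le c\le 1$ via the disjointness of the sets $T^{+}(E_k)$ (using that each $T(x)$ is nonempty), whereas the paper merely asserts the bound without justification.
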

\begin{proof}
\begin{eqnarray*}\underline{E}(aU+b )
&=&\sum_{k=1}^{n}(au_{k}+b)\underline{P}(u_{k})=\sum_{k=1}^{n}(au_{k}\underline{P}(u_{k})+b\underline{P}(u_{k})\\&=&
a\sum_{k=1}^{n}u_{k}\underline{P}(u_{k})+b\sum_{k=1}^{n}\underline{P}(u_{k})\\&=&a\underline{E}(U)+bc~~where~~
c=\sum_{k=1}^{n}\underline{P}(u_{k})(i.e~~ 0\leq c\leq 1).
\end{eqnarray*}
\end{proof}

\vspace{3mm}

\begin{thm} Let $T
:X\longrightarrow P^* (X )$  be a set-valued mapping and $U$ be a
random variable in the  stochastic approximation space $S=
(X,P)$. For any constants $a$ and $b$, we have
\[\overline{E}(aU+b ) = a\overline{E}(U)+bd~where~1\leq d\leq
n, ~~~n\in\mathbb{N}.\] \end{thm}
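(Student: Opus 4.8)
The plan is to follow the proof of Theorem~\ref{them} verbatim as far as the algebra goes, and then to work out the admissible range of the constant $d$, which is exactly where the statement departs from the lower-expectation case.

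Writing $A_k$ for the event $\{U=u_k\}$, so that $\overline{P}(u_k)=\overline{P}(U=u_k)=P(T^{-1}(A_k))$, linearity of a finite sum gives
\begin{eqnarray*}
\overline{E}(aU+b)
&=&\sum_{k=1}^{n}(au_{k}+b)\,\overline{P}(u_{k})
 = a\sum_{k=1}^{n}u_{k}\,\overline{P}(u_{k})+b\sum_{k=1}^{n}\overline{P}(u_{k})\\
&=&a\,\overline{E}(U)+bd,\qquad\text{where } d:=\sum_{k=1}^{n}\overline{P}(u_{k}).
\end{eqnarray*}
It then remains to check $1\leq d\leq n$. The inequality $d\leq n$ is immediate: by Definition~\ref{defn1} each term satisfies $\overline{P}(u_k)=P(T^{-1}(A_k))\leq 1$, and there are $n$ of them.

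For the lower bound I would use that $T^{-1}$ distributes over unions, $T^{-1}\!\big(\bigcup_k A_k\big)=\bigcup_k T^{-1}(A_k)$, together with $T^{-1}(X)=X$ (valid since $T(x)\neq\emptyset$ for every $x$). Because $U$ takes only the values $u_1,\dots,u_n$, we have $\bigcup_{k=1}^n A_k=X$, hence $\bigcup_{k=1}^n T^{-1}(A_k)=T^{-1}(X)=X$, and finite subadditivity of the probability measure $P$ yields
\[ d=\sum_{k=1}^{n}P(T^{-1}(A_k))\geq P\!\Big(\bigcup_{k=1}^{n}T^{-1}(A_k)\Big)=P(X)=1 .\]

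I do not anticipate a real obstacle here; the only conceptual point — and the reason the bound is $1\leq d\leq n$ rather than $0\leq d\leq 1$ as for the lower expectation — is that the sets $T^{-1}(A_k)$ generally overlap, whereas the sets $T^{+}(A_k)$ are pairwise disjoint (a nonempty $T(x)$ cannot be contained in two disjoint events), so for the upper probability one can only bound the sum of the measures from below by $P(X)=1$, while each individual measure is at most $1$. If desired, one can add that both extremes of $d$ are attainable, so the range is sharp, but this is not needed for the statement.
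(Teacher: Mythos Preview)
Your proof is correct and follows the same approach as the paper, which simply says ``The proof is similar to Theorem~\ref{them}'' and leaves the details implicit. In fact you go further than the paper does: your justification of the bounds $1\leq d\leq n$ via $T^{-1}\bigl(\bigcup_k A_k\bigr)=\bigcup_k T^{-1}(A_k)$, $T^{-1}(X)=X$, and subadditivity of $P$ supplies an argument that the paper omits entirely.
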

\begin{proof}The proof is
similar to Theorem \ref{them}.
\end{proof}

\vspace{3mm}

We define the lower and upper variances of a random variable $U$
in the stochastic approximation space $S = (X,P )$.

\begin{defn} Let $T
:X\longrightarrow P^* (X )$  be a set-valued mapping and $U$ be a
random variable in the
 stochastic approximation space $S = (X,P )$.
The lower and upper variance of $U$ is given by:
\[\underline{V}(U)=\underline{E}(U-\underline{E}(U))^2 ~~~;~~~ \overline{V}(U)=\overline{E}(U-\overline{E}(U))^2,\] respectively.
\end{defn}

\vspace{3mm}

\begin{defn} Let $T
:X\longrightarrow P^* (X )$  be a set-valued mapping and $U$ be a
random variable in the
 stochastic approximation space $S = (X,P )$. The
rough variance of $U$ is denoted by $V^*(U)$ and is given by:
\[V^*(U)=(\underline{V}(U), \overline{V}(U)).\] \end{defn}

\vspace{3mm}

\begin{ex} Consider the same experiment as in Example \ref{ex1}.
Then the lower and upper variances of $U$ are
\[\underline{V}(U)=0.4,~~\overline{V}(U)=13.75.\]The rough variance of $U$ is $V^*(U)=(0.4, 13.75).$\end{ex}

\vspace{3mm}

\begin{thm}\label{them1} Let $T
:X\longrightarrow P^* (X )$  be a set-valued mapping and $U$ be a
random variable in the  stochastic approximation space $S=
(X,P)$. Then
\[\underline{V}(U)=\underline{E}(U)^2-(2-c)(\underline{E}(U))^2~~~
where~~~ c =
\sum_{k=1}^{n}\underline{P}(u_k).\]\end{thm}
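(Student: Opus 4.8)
The plan is to expand the definition of the lower variance directly and then peel apart the sum, exactly in the spirit of the proof of Theorem \ref{them}. By definition, $\underline{V}(U) = \underline{E}\big((U-\underline{E}(U))^2\big) = \sum_{k=1}^{n}(u_k - \underline{E}(U))^2\,\underline{P}(u_k)$. First I would treat $\underline{E}(U)$ as a constant $\mu$ and square the bracket to obtain $\sum_k (u_k^2 - 2\mu u_k + \mu^2)\underline{P}(u_k)$, then split this into three sums. The first sum is $\sum_k u_k^2 \underline{P}(u_k)$, which the paper is evidently writing as $\underline{E}(U)^2$ (that is, the lower expectation of the random variable $U^2$, so the notation $\underline{E}(U)^2$ must be read as $\underline{E}(U^2)$, not $(\underline{E}(U))^2$). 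The second sum is $-2\mu \sum_k u_k \underline{P}(u_k) = -2\mu\cdot\underline{E}(U) = -2(\underline{E}(U))^2$. The third sum is $\mu^2 \sum_k \underline{P}(u_k) = c\,(\underline{E}(U))^2$, where $c = \sum_{k=1}^n \underline{P}(u_k)$ as in Theorem \ref{them}.

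Adding the second and third contributions gives $-2(\underline{E}(U))^2 + c(\underline{E}(U))^2 = -(2-c)(\underline{E}(U))^2$, so that $\underline{V}(U) = \underline{E}(U^2) - (2-c)(\underline{E}(U))^2$, which is the claimed identity once one adopts the paper's notational convention $\underline{E}(U)^2 := \underline{E}(U^2)$. I would present the computation as a single \texttt{eqnarray*} chain mirroring the display in the proof of Theorem \ref{them}, so that the bookkeeping of the three sums is transparent.

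The only genuine subtlety — and the step I would flag as the place where care is needed — is the one I just mentioned: unlike the classical case, $\sum_k \underline{P}(u_k)$ need not equal $1$, since $\underline{P}$ is only subadditive (Proposition, parts (1)--(8)) and not additive, so the two "extra" constant sums do not collapse to the familiar $-(\underline{E}(U))^2$. Keeping $c$ explicit throughout, and not silently setting it to $1$, is what makes the statement come out with the factor $(2-c)$ rather than $1$. I would also note in passing that $0 \le c \le 1$, exactly as recorded in Theorem \ref{them}, so the correction factor satisfies $1 \le 2-c \le 2$; this is not needed for the identity but is worth remarking since it shows $\underline{V}(U) \le \underline{E}(U^2) - (\underline{E}(U))^2$, i.e. the rough lower variance never exceeds the naive plug-in variance.
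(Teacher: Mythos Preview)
Your proposal is correct and follows essentially the same approach as the paper: expand $(U-\underline{E}(U))^2$, apply linearity of $\underline{E}$ term by term, and keep the constant sum $c=\sum_k\underline{P}(u_k)$ explicit rather than setting it to $1$. The only cosmetic difference is that the paper works at the level of the operator $\underline{E}$ (invoking Theorem~\ref{them} implicitly) while you unfold the sums $\sum_k(\cdots)\underline{P}(u_k)$ explicitly; your reading of the notation $\underline{E}(U)^2$ as $\underline{E}(U^2)$ is also exactly what the paper intends.
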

\begin{proof}
 We have \begin{eqnarray*}\underline{E}(U-\underline{E}(U) )^2
&=&\underline{E}(U^2-2U\underline{E}(U)+(\underline{E}(U)
)^2)\\&=&
\underline{E}(U)^2-2\underline{E}(U)\underline{E}(U)+c(\underline{E}(U))^2
~~~where~~~c=\sum_{k=1}^{n}\underline{P}(u_k)\\&=&\underline{E}(U)^2-2(\underline{E}(U))^2+c(\underline{E}(U))^2\\&=&
\underline{E}(U)^2-(2-c)\underline{E}(U)^2.
\end{eqnarray*}\end{proof}

\vspace{3mm}

\begin{thm} Let $T
:X\longrightarrow P^* (X )$  be a set-valued mapping and $U$ be a
random variable in the  stochastic approximation space $S=
(X,P)$. Then
\[\overline{V}(U)=\overline{E}(U)^2-(2-d)(\overline{E}(U))^2~~~ where~~~
d = \sum_{k=1}^{n}\overline{P}(u_k).\]\end{thm}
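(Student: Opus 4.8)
The plan is to reproduce the argument of Theorem~\ref{them1} verbatim, exchanging $\underline{E},\underline{P}$ for $\overline{E},\overline{P}$, so that the scalar $c=\sum_{k=1}^{n}\underline{P}(u_k)$ is replaced throughout by $d=\sum_{k=1}^{n}\overline{P}(u_k)$. The only facts I need are properties of the map $g\mapsto\overline{E}(g(U))=\sum_{k=1}^{n}g(u_k)\,\overline{P}(U=u_k)$: namely that, for fixed weights $\overline{P}(U=u_k)$, this map is additive in the integrand $g$ and homogeneous under multiplication of $g$ by a real scalar, and that for a constant function $\lambda$ one has $\overline{E}(\lambda)=\lambda\sum_{k=1}^{n}\overline{P}(u_k)=\lambda d$. (This last identity is exactly the phenomenon already recorded in the preceding theorem, where the additive constant $b$ contributes $bd$ rather than $b$.) Throughout, $\overline{E}(U)^2$ is read as $\overline{E}(U^2)=\sum_{k=1}^{n}u_k^2\,\overline{P}(U=u_k)$, matching the notational convention of the statement and of Theorem~\ref{them1}.

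First I would expand the square inside the definition of the upper variance:
\[
\overline{V}(U)=\overline{E}\bigl(U-\overline{E}(U)\bigr)^2=\overline{E}\bigl(U^2-2U\,\overline{E}(U)+(\overline{E}(U))^2\bigr).
\]
Next, applying additivity of $\overline{E}$ to the three summands, pulling the scalar $-2\overline{E}(U)$ out of the middle term, and applying the constant rule $\overline{E}(\lambda)=\lambda d$ to the last term (with $\lambda=(\overline{E}(U))^2$), I get
\[
\overline{E}(U^2)-2\,\overline{E}(U)\,\overline{E}(U)+d\,(\overline{E}(U))^2.
\]
Finally, writing $\overline{E}(U)^2$ for $\overline{E}(U^2)$ and collecting the last two terms yields $\overline{E}(U)^2-2(\overline{E}(U))^2+d(\overline{E}(U))^2=\overline{E}(U)^2-(2-d)(\overline{E}(U))^2$, which is the asserted identity. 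It is also worth noting in passing that $1\le d\le n$: each $\overline{P}(u_k)\le 1$ by Definition~\ref{defn1}, so $d\le n$, while subadditivity of $\overline{P}$ (part~(3) of the Proposition) together with $\bigcup_{k=1}^n\{U=u_k\}=X$ and $\overline{P}(X)=1$ gives $1=\overline{P}(X)\le\sum_{k=1}^{n}\overline{P}(u_k)=d$, so the formula is not vacuous.

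The only point that needs care, rather than being purely routine, is the bookkeeping around the non-standard ``linearity'' of $\overline{E}$. Because $\overline{P}$ is merely subadditive, $\overline{E}$ is not a genuine expectation, so I must be explicit that the two manipulations used are (i) additivity of $g\mapsto\overline{E}(g(U))$ in $g$, which is valid because the defining finite sum is additive term by term at fixed weights, and (ii) scalar homogeneity of the same map, which is immediate; no multiplicative property of $\overline{E}$ is invoked beyond factoring out the already-evaluated scalar $\overline{E}(U)$. The factor $d$ in front of $(\overline{E}(U))^2$ — in place of the $1$ one would obtain for an ordinary variance — is then forced precisely by the constant rule $\overline{E}(\lambda)=\lambda d$, exactly as the constant $c$ arose in Theorem~\ref{them1}.
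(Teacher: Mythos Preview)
Your argument is correct and matches the paper's approach exactly: the paper just says ``the proof is similar to Theorem~\ref{them1},'' and you have carried out precisely that substitution of $\overline{E},\overline{P},d$ for $\underline{E},\underline{P},c$ in the expansion $\overline{E}(U-\overline{E}(U))^2=\overline{E}(U^2)-2(\overline{E}(U))^2+d(\overline{E}(U))^2$. Your extra remarks on the bounds $1\le d\le n$ and on why additivity and scalar homogeneity of $g\mapsto\overline{E}(g(U))$ are legitimate are not in the paper but are harmless clarifications.
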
\begin{proof}
 The proof is similar to Theorem \ref{them1}.\end{proof}

\vspace{3mm}

\begin{thm}\label{them2} Let $T
:X\longrightarrow P^* (X )$  be a set-valued mapping and $U$ be a
random variable in the  stochastic approximation space $S=
(X,P)$. For any constants $a$ and $b$, we have
\[\underline{V}(aU+b ) = a^2\underline{E}(U^2)-(2a-c)(\underline{E}(U))^2+2b(a-c)\underline{E}(U)+b^2c~~~where~~~
c=\sum_{k=1}^{n}\underline{P}(u_k).\]
\end{thm}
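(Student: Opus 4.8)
The plan is to run the same computation as in the proof of Theorem \ref{them1}, now for the random variable $aU+b$, while keeping careful track of the fact that the functional $\underline{E}$ is built from the \emph{unnormalized} weights $\underline{P}(u_k)$. Two properties of $\underline{E}$ will be used throughout: it is additive over the finite value-list of a random variable and pulls out multiplicative constants, so it may be applied monomial-by-monomial to any polynomial expression in $U$; and, crucially, a constant $\lambda$ does not survive $\underline{E}$ unchanged but instead $\underline{E}(\lambda)=\lambda\sum_{k=1}^{n}\underline{P}(u_k)=\lambda c$. From the first property, $\underline{E}\big((aU+b)^2\big)=\underline{E}(a^2U^2+2abU+b^2)=a^2\underline{E}(U^2)+2ab\,\underline{E}(U)+b^2c$, and from Theorem \ref{them}, $\underline{E}(aU+b)=a\underline{E}(U)+bc$.

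First I would expand the square in the definition of $\underline{V}$ and write it, in the spirit of Theorem \ref{them1}, in the form $\underline{E}\big((aU+b)^2-2(aU+b)\underline{E}(U)+(\underline{E}(U))^2\big)$, and then apply $\underline{E}$ to the three summands separately. The first summand contributes $a^2\underline{E}(U^2)+2ab\,\underline{E}(U)+b^2c$; the middle summand contributes $-2\,\underline{E}(U)\,\underline{E}(aU+b)=-2\,\underline{E}(U)\big(a\underline{E}(U)+bc\big)$ after invoking Theorem \ref{them}; and the last, purely constant, summand contributes $c\,(\underline{E}(U))^2$ by the second property above.

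Finally I would add the three contributions and regroup by powers of $a$ and $b$: the $(\underline{E}(U))^2$-terms collapse to $-2a(\underline{E}(U))^2+c(\underline{E}(U))^2=-(2a-c)(\underline{E}(U))^2$, the $\underline{E}(U)$-terms collapse to $2ab\,\underline{E}(U)-2bc\,\underline{E}(U)=2b(a-c)\underline{E}(U)$, and $b^2c$ is already in final form, giving the asserted identity $\underline{V}(aU+b)=a^2\underline{E}(U^2)-(2a-c)(\underline{E}(U))^2+2b(a-c)\underline{E}(U)+b^2c$.

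The only place I expect any trouble is this final bookkeeping. Because $\underline{E}$ sends a constant $\lambda$ to $\lambda c$ rather than to $\lambda$, the cross term no longer cancels ``half'' of the squared-mean term the way it does in the classical identity $V(U)=E(U^2)-(E(U))^2$; every factor of $c$ must therefore be carried along, and $\underline{E}$ must be distributed over the monomials in $U$ one at a time rather than treated as a genuine (normalized) expectation. There is no analytic subtlety here — the value-list of $U$ is finite — so once the constants are handled correctly the identity drops out by direct computation.
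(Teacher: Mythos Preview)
Your proposal is correct and follows essentially the same route as the paper: both start from $\underline{E}\big((aU+b)-\underline{E}(U)\big)^2$, expand, apply $\underline{E}$ term by term using $\underline{E}(\lambda)=\lambda c$ for constants, and regroup to obtain the stated formula. The only cosmetic difference is that you bundle the expansion into three blocks and invoke Theorem~\ref{them} for the middle one, whereas the paper expands directly into six monomials; the underlying computation is identical.
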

\begin{proof} We have
\begin{eqnarray*}\underline{V}(aU+b
)&=&\underline{E}((aU+b)-\underline{E}(U))^2\\
&=&\underline{E}(aU+b)^2-2(aU+b)\underline{E}(U)+(\underline{E}(U))^2\\&=&
\underline{E}(a^2U^2+2abU+b^2-2aU\underline{E}(U)-2b\underline{E}(U)+(\underline{E}(U))^2)\\&=&
a^2\underline{E}(U^2)+2ab\underline{E}(U)+b^2c-2a(\underline{E}(U))^2-2bc\underline{E}(U)+c(\underline{E}(U))^2\\
&=&a^2\underline{E}(U^2)-(2a-c)(\underline{E}(U))^2+2b(a-c)\underline{E}(U)+b^2c, ~~~where~~~
c=\sum_{k=1}^{n}\underline{P}(u_k)
\end{eqnarray*}
\end{proof}

\vspace{3mm}

\begin{thm} Let $T
:X\longrightarrow P^* (X )$  be a set-valued mapping and $U$ be a
random variable in the  stochastic approximation space $S=
(X,P)$. For any constants $a$ and $b$, we have
\[\overline{V}(aU+b ) = a^2\overline{E}(U^2)-(2a-d)(\overline{E}(U))^2+2b(a-d)\overline{E}(U)+b^2d~~~where~~~
d=\sum_{k=1}^{n}\overline{P}(u_k).\] \end{thm}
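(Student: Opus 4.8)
The plan is to mimic the proof of the previous theorem (Theorem~\ref{them2}) verbatim, replacing the lower probability $\underline{P}$ by the upper probability $\overline{P}$ and the constant $c=\sum_{k=1}^{n}\underline{P}(u_k)$ by $d=\sum_{k=1}^{n}\overline{P}(u_k)$ throughout. The only facts I need are the definitions of $\overline{V}$, $\overline{E}$ and the linearity of the finite sum $\overline{E}(\cdot)=\sum_{k=1}^{n}(\cdot)\,\overline{P}(u_k)$, all of which are already available from the definitions given above.

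First I would expand $\overline{V}(aU+b)=\overline{E}\big((aU+b)-\overline{E}(U)\big)^2$ and multiply out the square to get $a^2U^2+2abU+b^2-2aU\overline{E}(U)-2b\overline{E}(U)+(\overline{E}(U))^2$ inside $\overline{E}$. Next I would apply the (finite-sum) "linearity" of $\overline{E}$ term by term, using the bookkeeping identity $\overline{E}(1)=\sum_{k=1}^{n}\overline{P}(u_k)=d$ for the constant terms, so that the $b^2$ term becomes $b^2 d$, the $-2b\,\overline{E}(U)$ term becomes $-2bd\,\overline{E}(U)$, and the $(\overline{E}(U))^2$ term becomes $d(\overline{E}(U))^2$; the remaining terms give $a^2\overline{E}(U^2)+2ab\,\overline{E}(U)-2a(\overline{E}(U))^2$. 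Finally I would collect the coefficients of $(\overline{E}(U))^2$ and of $\overline{E}(U)$ to obtain $a^2\overline{E}(U^2)-(2a-d)(\overline{E}(U))^2+2b(a-d)\overline{E}(U)+b^2d$, which is the claimed formula.

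Since the statement explicitly says the proof is similar to Theorem~\ref{them2}, the honest write-up is simply a one-line reference. I do not anticipate any real obstacle: the computation is purely formal and the only subtlety — that $\overline{E}$ is additive over finite sums but does not preserve the total mass $1$ (hence the appearance of $d$ rather than the $1$ one would see in the classical case) — is already handled by carrying the symbol $d$ explicitly, exactly as $c$ is carried in Theorem~\ref{them2}. Accordingly I would write:

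\begin{proof}
The proof is similar to Theorem \ref{them2}, replacing $\underline{E}$, $\underline{P}$, $c$ by $\overline{E}$, $\overline{P}$, $d$ respectively. Expanding,
\begin{eqnarray*}
\overline{V}(aU+b)&=&\overline{E}\big((aU+b)-\overline{E}(U)\big)^2\\
&=&\overline{E}\big(a^2U^2+2abU+b^2-2aU\overline{E}(U)-2b\overline{E}(U)+(\overline{E}(U))^2\big)\\
&=&a^2\overline{E}(U^2)+2ab\overline{E}(U)+b^2d-2a(\overline{E}(U))^2-2bd\overline{E}(U)+d(\overline{E}(U))^2\\
&=&a^2\overline{E}(U^2)-(2a-d)(\overline{E}(U))^2+2b(a-d)\overline{E}(U)+b^2d,
\end{eqnarray*}
where $d=\sum_{k=1}^{n}\overline{P}(u_k)$.
\end{proof}
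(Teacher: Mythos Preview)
Your proposal is correct and is exactly the approach the paper takes: the paper's own proof is the single line ``The proof is similar to Theorem~\ref{them2},'' and your expanded computation is precisely the overline/$d$ analogue of that theorem's computation.
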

\begin{proof}The proof is
similar to Theorem \ref{them2}.
\end{proof}

\vspace{3mm}

\end{document}